\documentclass[11pt]{amsart}

\usepackage[T1]{fontenc}
\usepackage[utf8]{inputenc}
\usepackage{lmodern}
\usepackage[american]{babel}
\usepackage[babel, final]{microtype}

\usepackage{amsmath,amssymb,amsthm,mathtools}
\usepackage[margin=1in]{geometry}
\usepackage[colorlinks=true,allcolors=blue]{hyperref}

\usepackage[numbers]{natbib}

\newcommand{\E}{\mathbb E}
\newcommand{\Pp}{\mathbb P}

\newcommand{\Hhalf}{H}      

\newcommand{\mue}{\mu^{\mathrm{even}}}
\newcommand{\muo}{\mu^{\mathrm{odd}}}

\newcommand{\meven}{m_{\mathrm{even}}}
\newcommand{\modd}{m_{\mathrm{odd}}}

\newcommand{\numset}[1]{\mathbb{#1}}

\newcommand{\Z}{\numset{Z}}

\newcommand{\cycle}[1]{\mathopen{(}#1\mathclose{)}}

\title{On the structure of the Poisson trinomial distribution}

\author[M. Broadie]{Mark Broadie}
\address{Graduate School of Business \\ Columbia University \\ New York, NY 10027}
\email{\href{mailto:mnb2@columbia.edu}{mnb2@columbia.edu}}
\urladdr{\url{https://business.columbia.edu/faculty/people/mark-broadie}}

\author[I. Petkova]{Ina Petkova}
\address{Department of Mathematics \\ Dartmouth College \\ Hanover, NH 03755}
\email{\href{mailto:ina.petkova@dartmouth.edu}{ina.petkova@dartmouth.edu}}
\urladdr{\url{https://math.dartmouth.edu/~ina/}}

\newtheorem{theorem}{Theorem}
\newtheorem{proposition}{Proposition}
\newtheorem{lemma}{Lemma}
\newtheorem{corollary}{Corollary}

\begin{document}

\begin{abstract}
We study sums of independent random variables that take values $0$, $1/2$, or $1$.
We show that the probability mass function of the sum splits into two interleaved parts: one supported on the integers and the other supported on the half-integers. Each part, when normalized, is a Poisson binomial distribution and hence log-concave with one or two modes. We also prove that each of the two conditional means (conditioning on being an integer or a half-integer) lies within $1/2$ of the unconditional mean. As a consequence, any two modes of the two conditional distributions are within $5/2$ of each other.
\end{abstract}

\maketitle

\section{Introduction}
The Poisson binomial distribution with parameters $n$ and $\{p_1, \ldots, p_n\}$ is the distribution of the sum of $n$ independent Bernoulli trials each with success probability $p_i$. It is known that for any choice of parameters the Poisson binomial distribution is log-concave with one or two modes. The closest integer to the mean is always a mode; the next closest integer to the mean may sometimes be a mode too. We generalize this to distributions whose support is $\{0, 1/2, 1\}$.

Suppose we have $n$ independent random variables, each taking values in $\{0, 1/2, 1\}$. We show that the distribution of their sum splits into two interleaved log-concave parts, supported on $\mathbb Z$ and on $\mathbb Z+\tfrac12$, with nearby conditional means and modes. To state the full result, we introduce some notation.

Let $X_1,\dots,X_n$ be independent random variables such that, for each $i$,
\[
\Pp(X_i=\tfrac12)=T_i,\qquad \Pp(X_i=1)=W_i,\qquad \Pp(X_i=0)=L_i\coloneqq1-T_i-W_i,
\]
and define $X = \sum_{i=1}^n X_i$. We call the distribution for $X$ a \emph{Poisson trinomial distribution with parameters $n$ and $\{(T_i, W_i)\}_{i=1}^n$}.

Define $\mu\coloneqq \E[X]$, and whenever $\Pp(X\in \mathbb Z)>0$ and $\Pp(X\in \mathbb Z+ \frac 1 2)>0$,  define
\[
\mue = \E\!\left[X\mid X\in \mathbb Z\right],
\qquad
\muo \coloneqq \E\!\left[X \mid X\in \mathbb Z+\tfrac12\right].
\]
The labels ``even'' and ``odd'' indicate whether an even or odd number of trials resulted in outcome $\frac 1 2$,  as we make explicit in Section~\ref{sec:means}.

Our main result describes the structure of the distribution when both parts are non-trivial: 

\begin{theorem}\label{thm:main}
Suppose $\Pp(X\in \mathbb Z)>0$ and $\Pp(X\in \mathbb Z+ \frac 1 2)>0$. 
Then the distributions for $X\mid (X\in \mathbb Z)$ and $X\mid (X\in \mathbb Z + \frac 1 2)$ are Poisson binomial, hence log-concave, with one or two modes. In particular, if $\meven$ and $\modd$ are modes  for $X\mid (X\in \mathbb Z)$ and $X\mid (X\in \mathbb Z + \frac 1 2)$, respectively, then 
\[
 |\meven-\mue|<1,\qquad |\modd-\muo|<1.
\]
Moreover,  
\[
|\mue - \mu |\leq \frac 1 2,\qquad
|\muo - \mu |\leq \frac 1 2, 
\]
and hence 
$|\mue - \muo| \leq 1$.

Similarly, 
\[
 |\meven-\mu|<\frac 3 2,\qquad |\modd-\mu|< \frac 3 2,
\]
and hence $|\meven - \modd | \leq \frac 5 2$. 
\end{theorem}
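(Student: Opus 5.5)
The plan is to encode everything in the probability generating function in half-steps. Put $K=2X$ and
\[
G(s)=\E[s^{K}]=\prod_{i=1}^n\bigl(L_i+T_i s+W_i s^2\bigr).
\]
Split $G$ into its even and odd parts, $G(s)=h(s^2)+s\,g(s^2)$. Then $X\in\mathbb Z$ corresponds to the even powers of $s$ and $X\in\mathbb Z+\frac12$ to the odd powers. So, up to normalization, $h(y)$ is the generating function of $X\mid (X\in\mathbb Z)$ and $g(y)$ is that of $X-\frac12\mid (X\in\mathbb Z+\frac12)$, with $y$ counting unit steps.

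Each factor $L_i+T_is+W_is^2$ has nonnegative coefficients and degree at most $2$, so its roots lie in the closed left half-plane. Hence $G$ is Hurwitz stable, at least weakly. The key tool is the Hermite--Biehler theorem: for a Hurwitz stable real polynomial, $h$ and $g$ have only real, nonpositive, simple, interlacing roots. Thus $h(y)=c\prod_k(y+r_k)$ and $g(y)=c'\prod_k(y+q_k)$ with $r_k,q_k\ge 0$. Normalizing each factor $(y+r)/(1+r)$ gives a Bernoulli variable with parameter $1/(1+r)$, so both conditional laws are Poisson binomial. Log-concavity, having one or two modes, and $|m-\text{mean}|<1$ then follow from the classical results on Poisson binomial distributions (Darroch).

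Degenerate cases need separate handling: roots on the imaginary axis, factors with $W_i=0$ or $L_i=0$, and repeated roots. I would treat these by perturbing the parameters into the strictly stable regime and passing to the limit. Real-rootedness with nonpositive roots, and weak interlacing, are preserved under limits by Hurwitz's theorem, and $h,g\not\equiv0$ by hypothesis.

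For the means, write $\mue=h'(1)/h(1)=\sum_k \frac1{1+r_k}$ and $\muo=\frac12+g'(1)/g(1)=\frac12+\sum_k\frac1{1+q_k}$. Differentiating $G$ gives
\[
\mu=\frac{G'(1)}{2G(1)}=\frac{h'(1)+g'(1)+\tfrac12 g(1)}{h(1)+g(1)}.
\]
This is the convex combination of $\mue$ and $\muo$ with weights proportional to $h(1)$ and $g(1)$. Hence $\mu$ lies between $\mue$ and $\muo$, and it suffices to show $|\mue-\muo|\le 1$. Equivalently,
\[
\Sigma_h-\Sigma_g\in[-\tfrac12,\tfrac32],\qquad\text{where }\Sigma_h=\sum_k\tfrac1{1+r_k},\quad \Sigma_g=\sum_k\tfrac1{1+q_k}.
\]
I will aim for the stronger statement $\Sigma_h-\Sigma_g\in[0,1]$. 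The function $x\mapsto 1/(1+x)$ is decreasing from $1$ to $0$ on $[0,\infty)$. So if the roots interlace starting with an $r$ closest to $0$, namely $0\le r_1<q_1<r_2<q_2<\cdots$, then $\Sigma_h-\Sigma_g$ is an alternating sum of decreasing terms in $(0,1]$, and it lies in $[0,1]$.

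The main obstacle is justifying this ordering: the root nearest the origin must belong to $h$, and $h$ must have the same number of roots as $g$ or one more. When $\deg G=2m$, $h$ has degree $m$ and $g$ degree $m-1$, and the root-count part is automatic. When $\deg G$ is odd, both have the same degree, and I would use the sign conditions in the Hermite--Biehler theorem (e.g.\ $h(0)=G(0)\ge0$ and $g(0)=G'(0)\ge0$, together with the positivity of $h'g-hg'$-type Wronskian on the negative axis) to pin down which root comes first. I expect this step to need the most care, especially in the degenerate limits.

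Finally, combine the estimates. The triangle inequality gives $|\meven-\mu|<1+\frac12=\frac32$ and likewise $|\modd-\mu|<\frac32$, so $|\meven-\modd|<3$. Since $\meven\in\mathbb Z$ and $\modd\in\mathbb Z+\frac12$, their difference is an odd multiple of $\frac12$, so $|\meven-\modd|\le\frac52$.
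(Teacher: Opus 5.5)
Your proof is correct in substance, but for the mean bounds it takes a different route from the paper, and one reduction step is misstated.

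\textbf{How the routes differ.} The paper never uses roots for the means. It writes $\mue-\mu=(b-a\mu)/(1+a)$ and $\muo-\mu=-(b-a\mu)/(1-a)$ with $a=\E[(-1)^S]$ and $b=\E[X(-1)^S]$. It computes $b-a\mu=\sum_i T_i(W_i-L_i)\prod_{j\ne i}(1-2T_j)$ explicitly and bounds it by $(1-|a|)/2$ via a telescoping-product estimate. Hermite--Biehler is invoked only to get real-rootedness of the even and odd parts. You instead read the conditional means off the roots of $h$ and $g$, using the ordered interlacing together with the identity $\mu=\Pp(X\in\Z)\,\mue+\Pp(X\in\Z+\tfrac12)\,\muo$. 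Your route ties the structural and mean statements to a single theorem and yields a sharper constant. The paper's route is elementary and independent of stability theory.

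\textbf{The misstated reduction.} Knowing that $\mu$ lies between $\mue$ and $\muo$ and that $|\mue-\muo|\le 1$ does \emph{not} suffice. Indeed, $|\mue-\mu|=\Pp(X\in\Z+\tfrac12)\,|\mue-\muo|$, and that probability can be close to $1$, so you would only get each conditional mean within $1$ of $\mu$. What you need is $|\mue-\muo|\le\tfrac12$. That is exactly what your stronger target delivers: $\mue-\muo=\Sigma_h-\Sigma_g-\tfrac12\in[-\tfrac12,\tfrac12]$, so both conditional means are within $\tfrac12$ of $\mu$. With the reduction stated this way the argument closes. It even improves the theorem's $|\mue-\muo|\le 1$ to $\le\tfrac12$, which is equivalent to $|b-a\mu|\le(1-a^2)/4$ and sharper than the paper's $(1-|a|)/2$.

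\textbf{The step you flagged as the main obstacle is standard.} The ordering $r_1<q_1<r_2<\cdots$ with $\deg h-\deg g\in\{0,1\}$ is part of the Hermite--Biehler theorem in Gantmacher's ``positive pair'' form. You can also see it directly: $\arg G(i\omega)$ increases strictly from $0$ (since $G(0)>0$). Hence the first zero met for $\omega>0$ is a zero of $\operatorname{Re}G(i\omega)=h(-\omega^2)$, not of the imaginary part.

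\textbf{Boundary parameters.} Your perturbation treatment is sound: $\mue$ and $\muo$ are continuous in the parameters while $h(1),g(1)>0$, and real-rootedness is preserved under limits. This is more careful than the paper, whose Hurwitz-stability lemma assumes $L_i,T_i,W_i>0$ and so does not cover these cases explicitly.
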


We also consider the degenerate case where one of the two parts of the distribution is trivial:

\begin{theorem}\label{thm:A}
Suppose  $\Pp(X\in \mathbb Z) = 0$ or $\Pp(X\in \mathbb Z+ \frac 1 2) = 0$.  Then $T_i\in \{0,1\}$ for $i = 1, \ldots, n$, and if $k$ is the number of indices $i$ for which  $T_i = 0$, then the distribution is the Poisson binomial on the respective $k$ trials shifted to the right by $\frac{n-k}{2}$.
\end{theorem}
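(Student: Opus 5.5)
The statement to prove is Theorem~\ref{thm:A}, and the plan is to compute $\Pp(X\in\mathbb Z)$ and $\Pp(X\in\mathbb Z+\frac12)$ explicitly through a parity generating function. Since $X\in\mathbb Z$ exactly when an even number of the $X_i$ equal $\frac12$, let $\varepsilon_i=1$ if $X_i=\frac12$ and $\varepsilon_i=0$ otherwise. Then, by independence,
\[
\Pp(X\in\mathbb Z)-\Pp(X\in\mathbb Z+\tfrac12)=\E\Bigl[\prod_{i=1}^n(-1)^{\varepsilon_i}\Bigr]=\prod_{i=1}^n(1-2T_i).
\]
Since the two probabilities also sum to $1$, we get
\[
\Pp(X\in\mathbb Z)=\frac{1+\prod_i(1-2T_i)}{2},\qquad \Pp(X\in\mathbb Z+\tfrac12)=\frac{1-\prod_i(1-2T_i)}{2}.
\]
So one of the two probabilities vanishes exactly when $\prod_i(1-2T_i)=\pm1$.

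The key step is to read off the constraint on the $T_i$. Each $T_i\in[0,1]$, so $|1-2T_i|\le 1$, with equality iff $T_i\in\{0,1\}$. A product of numbers of absolute value at most $1$ has absolute value $1$ only if every factor has absolute value $1$. Hence $T_i\in\{0,1\}$ for all $i$. This is the only point needing care, and it is elementary.

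Next, identify the distribution. Let $k$ be the number of indices with $T_i=0$. For the $n-k$ indices with $T_i=1$, we have $W_i=L_i=0$, so $X_i=\frac12$ almost surely. Their total contribution is the constant $\frac{n-k}{2}$. For the $k$ indices with $T_i=0$, $X_i$ takes only the values $0$ and $1$, so it is a Bernoulli$(W_i)$ variable. By independence, their sum is Poisson binomial with parameters $k$ and $\{W_i: T_i=0\}$. Therefore $X$ equals this Poisson binomial variable shifted by $\frac{n-k}{2}$, as claimed.

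The support then lies entirely in $\mathbb Z$ or entirely in $\mathbb Z+\frac12$, according to the parity of $n-k$. This is consistent with which probability vanishes. I expect no real obstacle beyond the absolute-value argument for the product.
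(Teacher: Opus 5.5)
Your proof is correct, but it reaches the key step $T_i\in\{0,1\}$ by a different route than the paper.

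The paper argues by contradiction using a one-step conditioning. If some $T_i\in(0,1)$, then
\[
\Pp(X\in\mathbb Z)=T_i\,\Pp(X-X_i\in\mathbb Z+\tfrac12)+(1-T_i)\,\Pp(X-X_i\in\mathbb Z).
\]
At least one probability on the right is positive and both coefficients are positive, so $\Pp(X\in\mathbb Z)>0$. Symmetrically $\Pp(X\in\mathbb Z+\tfrac12)>0$.

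You instead compute the parity masses exactly, $\Pp(X\in\mathbb Z)=\frac{1+a}{2}$ and $\Pp(X\in\mathbb Z+\tfrac12)=\frac{1-a}{2}$ with $a=\prod_i(1-2T_i)$. This is the same quantity the paper already derives in Section~\ref{sec:means}. You then observe that $|a|=1$ forces every $|1-2T_i|=1$.

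Your version is quantitative: the lighter of the two parts has mass exactly $\frac{1-|a|}{2}$, which measures how far the distribution is from degenerate. It also dovetails with the bound $|b-a\mu|\le\frac{1-|a|}{2}$ of Lemma~\ref{lem:key}. The paper's argument is purely qualitative and computation-free, needing only one trial with $T_i\in(0,1)$ and no product formula.

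The identification of the distribution as a Poisson binomial shifted by $\frac{n-k}{2}$ is the same in both.
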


The proof of Theorem~\ref{thm:main} proceeds in two stages. In Section~\ref{sec:means}, we compute the three means explicitly and show the bound claimed in Theorem~\ref{thm:main}. In Section~\ref{sec:modes}, we prove unimodality, and in fact log-concavity of the two conditional distributions, and relate the modes to the means, to complete the proof of Theorem~\ref{thm:main}. We discuss the degenerate case in Section~\ref{sec:deg}. 

The Poisson trinomial distribution arises naturally in several applied settings. Perhaps the most direct motivation is team competitions with win-tie-loss formats, such as the Ryder Cup in golf, the Davis Cup in tennis, or international soccer  tournaments, where each one-on-one match contributes $1$, $\frac{1}{2}$, or $0$ points to the team score and the total score is a sum of independent $\{0,\frac{1}{2},1\}$-valued random variables. A potential source of examples is reliability theory, where systems with three states (failed, degraded, fully 
functioning) and independent components lead to the same distributional structure. Another potential source arises in clinical trials with ordered categorical outcomes (worse, unchanged, improved), which are naturally coded as $0$, $\frac{1}{2}$, and $1$. In each of these settings, the results of this paper---in particular the log-concavity and the proximity of the conditional means and modes to the 
unconditional mean---provide useful structural information about the distribution 
of the aggregate outcome. We elaborate on the team competition application in Section~\ref{sec:apps}, deriving a couple of optimization results.

\subsection*{Acknowledgments} We thank Pete Winkler for helpful comments. IP was partially supported by NSF CAREER Grant DMS-2145090.

\section{Means}\label{sec:means}

In this section, we assume $\Pp(X\in \mathbb Z)>0$ and $\Pp(X\in \mathbb Z+ \frac 1 2)>0$, and we
 study the means of the distributions for $X$,  $X\mid (X\in \mathbb Z)$, and $X\mid (X\in \mathbb Z + \frac 1 2)$. We  show the following: 
\begin{proposition}\label{prop:bound}
Each of the conditional means is within $\frac 1 2 $ of the mean for $X$:
\[
|\mue - \mu |\leq \frac 1 2,\qquad
|\muo - \mu |\leq \frac 1 2.
\]
\end{proposition}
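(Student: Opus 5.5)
The plan is to compute $\mue-\mu$ and $\muo-\mu$ in closed form with a parity-weighted expectation, and then bound the result by an elementary product inequality.

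\textbf{Step 1: parity weighting.} Let $Y_i=\mathbf 1\{X_i=\tfrac12\}$ and $Y=\sum_i Y_i$. Then $X\in\mathbb Z$ exactly when $Y$ is even, so $\mathbf 1\{X\in\mathbb Z\}=\tfrac12(1+(-1)^Y)$. Set
\[
s_i\coloneqq\E[(-1)^{Y_i}]=1-2T_i,\qquad S\coloneqq\prod_i s_i .
\]
By independence $\Pp(X\in\mathbb Z)=\tfrac12(1+S)$ and $\Pp(X\in\mathbb Z+\tfrac12)=\tfrac12(1-S)$. The hypothesis of the section is exactly $1\pm S>0$. Also $\E[X\mathbf 1\{X\in\mathbb Z\}]=\tfrac12(\mu+D)$ and $\E[X\mathbf 1\{X\in\mathbb Z+\tfrac12\}]=\tfrac12(\mu-D)$, where
\[
D\coloneqq\E[X(-1)^Y]=\sum_i \E[X_i(-1)^{Y_i}]\prod_{j\ne i}s_j=\sum_i\bigl(W_i-\tfrac{T_i}{2}\bigr)\prod_{j\neq i}s_j .
\]
The middle equality uses independence to factor each term of $X=\sum_i X_i$.

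\textbf{Step 2: closed form for the differences.} This gives
\[
\mue-\mu=\frac{D-\mu S}{1+S},\qquad \muo-\mu=-\frac{D-\mu S}{1-S}.
\]
Write $\mu S=\sum_i(W_i+\tfrac{T_i}{2})\,s_i\prod_{j\ne i}s_j$ and collect the $i$-th terms. A short computation gives
\[
\bigl(W_i-\tfrac{T_i}{2}\bigr)-\bigl(W_i+\tfrac{T_i}{2}\bigr)(1-2T_i)=-T_i(1-T_i-2W_i)=-T_i(L_i-W_i).
\]
Hence
\[
D-\mu S=-\sum_i T_i(L_i-W_i)\prod_{j\neq i}s_j .
\]

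\textbf{Step 3: reduction to an inequality.} Put $a_i\coloneqq|s_i|\in[0,1]$ and $P\coloneqq\prod_i a_i=|S|$. Since $1+S$ and $1-S$ are both at least $1-P>0$, it suffices to show $|D-\mu S|\le \tfrac12(1-P)$. We have $|L_i-W_i|\le L_i+W_i=1-T_i$ and $T_i(1-T_i)=\tfrac14(1-a_i^2)$. So it is enough to prove, for all $a_i\in[0,1]$,
\[
f_n\coloneqq\sum_{i=1}^n (1-a_i^2)\prod_{j\neq i}a_j\;\le\;2\Bigl(1-\prod_{i=1}^n a_i\Bigr).
\]

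\textbf{Step 4: induction on $n$.} This inequality is the only real obstacle, and induction handles it. For $n=1$ it reads $(1-a)(1+a)\le 2(1-a)$, which holds since $a\le 1$. For the inductive step write $P_{n-1}=\prod_{i<n}a_i$. Then $f_n=a_nf_{n-1}+(1-a_n^2)P_{n-1}$, and the inductive hypothesis gives
\[
f_n\le 2a_n(1-P_{n-1})+(1-a_n^2)P_{n-1}.
\]
Subtracting this from $2(1-a_nP_{n-1})$ leaves
\[
(1-a_n)\bigl(2-(1+a_n)P_{n-1}\bigr)\ge 0,
\]
which is nonnegative because $(1+a_n)P_{n-1}\le 2$. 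This completes the induction and yields both bounds of Proposition~\ref{prop:bound}.
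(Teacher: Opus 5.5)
Your proof is correct and follows the paper's argument essentially step for step: the same parity weighting $\tfrac12(1\pm(-1)^Y)$, the same closed forms $\mue-\mu=(D-\mu S)/(1+S)$ and $\muo-\mu=-(D-\mu S)/(1-S)$ with $D-\mu S=\sum_i T_i(W_i-L_i)\prod_{j\ne i}s_j$, and the same reduction to $|D-\mu S|\le\tfrac12(1-|S|)$. The only cosmetic difference is that you keep $T_i(1-T_i)=\tfrac14(1-a_i^2)$ exactly and prove the resulting product inequality by induction, whereas the paper uses the bound $T_i(1-T_i)\le\tfrac12(1-|1-2T_i|)$ and closes with a telescoping sum over the partial products $\prod_{j\ge k}|1-2T_j|$.
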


To prove Proposition~\ref{prop:bound}, we first  make some explicit computations. 

Define indicator variables and their sum
\[
 Z_i \coloneqq \mathbf 1_{\{X_i=\tfrac12\}},\qquad S\coloneqq \sum_{i=1}^n Z_i
\]
and observe that, in terms of $S$, the conditional means become
\[
\mue=\E[X\mid S \text{ even}],\qquad
\muo=\E[X\mid S \text{ odd}].
\]
Indeed, $2X=\sum_{i=1}^n (2X_i)$ is an integer and $2X\equiv \sum_{i=1}^n Z_i = S\pmod 2$, so $X\in\mathbb Z$ if and only if $S$ is even, and $X\in\mathbb Z+\tfrac12$ if and only if $S$ is odd. 

To facilitate the exposition, also define
\[
a \coloneqq \E[(-1)^S],\qquad b \coloneqq \E[X(-1)^S]. 
\]
Then
\[
\Pp(S\text{ even})=\E\!\left[\frac{1+(-1)^S}{2}\right]=\frac{1+a}{2},
\qquad
\E[X\mathbf 1_{\{S\text{ even}\}}]=\E\!\left[X\frac{1+(-1)^S}{2}\right]=\frac{\mu+b}{2},
\]
and hence
\[
\mue = \frac{\mu+b}{1+a}.
\]
Similarly, we obtain 
\[
\muo = \frac{\mu-b}{1-a}.
\]
The differences of means that we are interested in become 
\begin{equation}\label{eq:mu-diff}
\mue-\mu=\frac{b-a\mu}{1+a}, \qquad \muo-\mu = - \frac{b-a\mu}{1-a}.
\end{equation}

We next compute $a$, $b$, and $\mu$ explicitly. 
Since the $Z_j$ are independent,
\[
a=\E\!\left[\prod_{j=1}^n (-1)^{Z_j}\right]=\prod_{j=1}^n \E[(-1)^{Z_j}].
\]
For each $j$, $Z_j\sim\mathrm{Bernoulli}(T_j)$, so
\[
\E[(-1)^{Z_j}]=(1-T_j)\cdot 1 + T_j\cdot(-1)=1-2T_j.
\]
Hence
\begin{equation}\label{eq:a_product_local}
a=\prod_{j=1}^n (1-2T_j).
\end{equation}
Next,
\begin{align*}
b
&=\E\!\left[X(-1)^S\right]
=\E\!\left[\left(\sum_{i=1}^n X_i\right)\prod_{j=1}^n (-1)^{Z_j}\right]
=\sum_{i=1}^n \E\!\left[X_i\prod_{j=1}^n (-1)^{Z_j}\right]\\
&=\sum_{i=1}^n \E\!\left[X_i(-1)^{Z_i}\prod_{j\ne i} (-1)^{Z_j}\right].
\end{align*}
By independence across indices, $(X_i,Z_i)$ is independent of $\{Z_j\}_{j\ne i}$, hence
\[
\E\!\left[X_i(-1)^{Z_i}\prod_{j\ne i} (-1)^{Z_j}\right]
=\E\!\left[X_i(-1)^{Z_i}\right]\prod_{j\ne i}\E[(-1)^{Z_j}].
\]
We already have $\E[(-1)^{Z_j}]=1-2T_j$. Also,
\[
\E[X_i(-1)^{Z_i}]
=1\cdot W_i\cdot 1+\left(\tfrac12\right)\cdot T_i\cdot(-1)+0\cdot L_i\cdot 1
= W_i-\frac{T_i}{2}.
\]
Therefore
\begin{equation}\label{eq:b_local}
b=\sum_{i=1}^n\left(W_i-\frac{T_i}{2}\right)\prod_{j\ne i}(1-2T_j).
\end{equation}
Last, 
\begin{equation}\label{eq:mu}
\mu=\E[X]=\sum_{i=1}^n \E[X_i]=\sum_{i=1}^n\left(W_i+\frac{T_i}{2}\right).
\end{equation}

We can now compute the numerator that appears in \eqref{eq:mu-diff}:
\begin{lemma}\label{lem:b-am}
For $a,b,\mu$ as above, we have
\begin{equation}\label{eq:b-am}
b-a\mu=\sum_{i=1}^n T_i(W_i-L_i)\prod_{j\ne i}(1-2T_j),
\qquad \textrm{where } L_i=1-T_i-W_i.
\end{equation}
\end{lemma}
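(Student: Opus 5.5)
We will prove the identity by direct algebraic manipulation of the explicit formulas \eqref{eq:a_product_local}, \eqref{eq:b_local}, and \eqref{eq:mu}. Throughout we write $P_i \coloneqq \prod_{j\ne i}(1-2T_j)$, so that $a=(1-2T_i)P_i$ for every $i$.

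The first step is to distribute $a\mu$ over the sum defining $\mu$. For the $i$-th summand of $\mu$ we use the factorization of $a$ adapted to that same index $i$:
\[
a\mu=\sum_{i=1}^n \Bigl(W_i+\tfrac{T_i}{2}\Bigr)(1-2T_i)P_i .
\]
This puts $a\mu$ in the same shape as $b=\sum_i (W_i-\tfrac{T_i}{2})P_i$. Subtracting termwise then gives
\[
b-a\mu=\sum_{i=1}^n P_i\Bigl[\Bigl(W_i-\tfrac{T_i}{2}\Bigr)-\Bigl(W_i+\tfrac{T_i}{2}\Bigr)(1-2T_i)\Bigr].
\]

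The second step simplifies the bracket. Expanding the product gives $W_i+\tfrac{T_i}{2}-2T_iW_i-T_i^2$, so the bracket equals
\[
-T_i+2T_iW_i+T_i^2 = T_i\,(2W_i+T_i-1).
\]
Since $L_i=1-T_i-W_i$, we have $2W_i+T_i-1=W_i-L_i$. The bracket is therefore $T_i(W_i-L_i)$, which yields \eqref{eq:b-am}.

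The only point that needs care is the first step. Splitting $a$ as $(1-2T_i)P_i$ separately for each summand of $\mu$ is what lets the two sums be combined index by index; it does not require any $T_j\neq \tfrac12$, since no division occurs. Everything after that is routine expansion, so I do not expect a real obstacle.
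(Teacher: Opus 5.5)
Your proof is correct and follows the paper's argument exactly: you rewrite $a\mu$ as $\sum_i (W_i+\tfrac{T_i}{2})(1-2T_i)\prod_{j\ne i}(1-2T_j)$, subtract termwise from $b$, and simplify the bracket to $T_i(W_i-L_i)$. Your explicit expansion of the bracket fills in the ``simplifying'' step that the paper leaves to the reader.
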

\begin{proof}
From \eqref{eq:a_product_local} and \eqref{eq:mu}, we get
\begin{align*}
a\mu
&=\left(\prod_{j=1}^n(1-2T_j)\right)\sum_{i=1}^n\left(W_i+\frac{T_i}{2}\right)\notag\\
&=\sum_{i=1}^n\left(W_i+\frac{T_i}{2}\right)(1-2T_i)\prod_{j\ne i}(1-2T_j).
\end{align*}
Combining with \eqref{eq:b_local}, we have
\begin{align*}
b-a\mu
&=\sum_{i=1}^n
\Biggl[
\left(W_i-\frac{T_i}{2}\right)
-\left(W_i+\frac{T_i}{2}\right)(1-2T_i)
\Biggr]\prod_{j\ne i}(1-2T_j).
\end{align*}
Simplifying results in \eqref{eq:b-am}.
\end{proof}

Next, we derive a bound on  $b-a\mu$:

\begin{lemma}\label{lem:key}
For $a,b,\mu$ as above, we have
\[
|b-a\mu|\le \frac{1-|a|}{2}.
\]
\end{lemma}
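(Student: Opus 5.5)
The plan is to start from the explicit formula of Lemma~\ref{lem:b-am} and bound the sum term by term in absolute value. Write $c_j \coloneqq |1-2T_j|\in[0,1]$, so that $|a|=\prod_{j=1}^n c_j$ by \eqref{eq:a_product_local}. The triangle inequality applied to \eqref{eq:b-am} gives
\[
|b-a\mu|\le \sum_{i=1}^n T_i\,|W_i-L_i|\prod_{j\ne i}c_j .
\]
Since $W_i,L_i\ge 0$ and $W_i+L_i=1-T_i$, we have $|W_i-L_i|\le 1-T_i$. Hence each term is at most $T_i(1-T_i)\prod_{j\ne i}c_j$.

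Next I rewrite the factor $T_i(1-T_i)$ in terms of $c_i$. A direct computation gives $T_i(1-T_i)=\tfrac14\bigl(1-(1-2T_i)^2\bigr)=\tfrac14(1-c_i^2)$. Because $c_i\in[0,1]$, we have $\tfrac{1+c_i}{2}\le 1$, and therefore
\[
\tfrac14(1-c_i^2)=\tfrac{1-c_i}{2}\cdot\tfrac{1+c_i}{2}\le \tfrac{1-c_i}{2}.
\]
Combining this with the previous step reduces the lemma to the purely combinatorial inequality
\[
\sum_{i=1}^n (1-c_i)\prod_{j\ne i}c_j \;\le\; 1-\prod_{j=1}^n c_j \qquad (c_j\in[0,1]).
\]

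This last inequality is the only step needing an idea, and I would prove it by telescoping. Setting $P_k\coloneqq\prod_{j\le k}c_j$ with $P_0=1$, we have
\[
1-P_n=\sum_{i=1}^n (P_{i-1}-P_i)=\sum_{i=1}^n (1-c_i)\prod_{j<i}c_j .
\]
Since every $c_j\le 1$, $\prod_{j<i}c_j\ge\prod_{j\ne i}c_j$, and the coefficients $1-c_i$ are nonnegative. Comparing term by term gives the desired inequality.

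Putting the three steps together yields $|b-a\mu|\le \tfrac12\bigl(1-\prod_j c_j\bigr)=\tfrac{1-|a|}{2}$. As an alternative, one could argue by induction on $n$: adding a trial multiplies the old sum by $c_n$ and adds $(1-c_n)\prod_{j<n}c_j$, which reproduces the same telescoping identity. I expect no real obstacle beyond spotting the identity $T(1-T)=\tfrac14(1-c^2)$.
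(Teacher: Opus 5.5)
Your proof is correct and follows the paper's argument step for step: you apply the triangle inequality to Lemma~\ref{lem:b-am}, bound $T_i|W_i-L_i|\le T_i(1-T_i)\le \frac{1-c_i}{2}$, and finish with a telescoping comparison of products. The differences are cosmetic. You justify the middle bound via the identity $T_i(1-T_i)=\frac14(1-c_i^2)$, where the paper uses $T_i(1-T_i)\le\min(T_i,1-T_i)=\frac{1-\alpha_i}{2}$. You also telescope with prefix products $\prod_{j<i}c_j$, where the paper uses suffix products $r_{k+1}=\prod_{j>k}\alpha_j$.
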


\begin{proof}
Let $\alpha_i \coloneqq |1-2T_i|\in[0,1]$ and $c \coloneqq \prod_{i=1}^n\alpha_i$. Note that $c = |a|$.

Taking absolute values in Equation~\eqref{eq:b-am} and using  $\alpha_j=|1-2T_j|$ gives
\begin{equation}\label{eq:abs1}
|b-a\mu|\le \sum_{i=1}^n T_i\,|W_i-L_i|\prod_{j\ne i}\alpha_j.
\end{equation}
Since $W_i,L_i\ge 0$, we have $|W_i-L_i|\le W_i+L_i = 1 - T_i$, so 
\begin{equation}\label{eq:pointwise}
T_i\,|W_i-L_i|\le  T_i (1 - T_i)\le  \min(T_i,1-T_i)=\frac{1-\alpha_i}{2}.
\end{equation}
Substituting \eqref{eq:pointwise} into \eqref{eq:abs1},
\begin{equation}\label{eq:abs2}
|b-a\mu|\le \sum_{i=1}^n \frac{1-\alpha_i}{2}\prod_{j\ne i}\alpha_j.
\end{equation}
Define partial products $r_k\coloneqq\prod_{j=k}^n\alpha_j$ and $r_{n+1}\coloneqq1$. Then
\[
1-r_1=\sum_{k=1}^n (r_{k+1}-r_k)=\sum_{k=1}^n (1-\alpha_k)r_{k+1}.
\]
Also $\prod_{j\ne k}\alpha_j=\left(\prod_{j<k}\alpha_j\right)r_{k+1}\le r_{k+1}$, hence
\[
(1-\alpha_k)\prod_{j\ne k}\alpha_j \le (1-\alpha_k)r_{k+1}.
\]
Summing,
\[
\sum_{k=1}^n (1-\alpha_k)\prod_{j\ne k}\alpha_j \le \sum_{k=1}^n (1-\alpha_k)r_{k+1} = 1-r_1=1-c.
\]
Combining with \eqref{eq:abs2}, we conclude
\[
|b-a\mu|\le \frac{1-c}{2}=\frac{1-|a|}{2}.\qedhere
\]
\end{proof}

We can now prove Proposition~\ref{prop:bound}: 
\begin{proof}[Proof of Proposition~\ref{prop:bound}]
Since  $\Pp(S\text{ even})>0$ and $\Pp(S\text{ odd})>0$, we have $1+a>0$ and $1-a>0$, and hence $1 + |a|>0$. We also have the inequalities $1-|a|\le 1+a$ and $1-|a|\le 1-a$, which follow from $|a|\ge \pm a$, so by Lemma~\ref{lem:key} we get
\[
|\mue-\mu| = \frac{|b-a\mu|}{1+a}
\le \frac{\frac{1-|a|}{2}}{1+a}
\le \frac{1}{2}
\]
and
\[
|\muo-\mu| = \frac{|b-a\mu|}{1-a}
\le \frac{\frac{1-|a|}{2}}{1-a}
\le \frac{1}{2}.\qedhere
\]
\end{proof}

\section{Log-concavity and modes}\label{sec:modes}

In this section, we show that the two parity-conditioned distributions are log-concave (and hence unimodal). 

\subsection{Generating functions and parity extraction}
Define the integer-valued random variable $H = 2X$, and 
observe that the pgf for $H$ is
\[
G(w):=\E[w^{\Hhalf}]=\prod_{i=1}^n(L_i+T_i w+W_iw^2).
\]
Write
\[
G(w)=\sum_{m=0}^{2n} a_m w^m,\qquad a_m=\Pp(\Hhalf=m).
\]
Define the even- and odd-part polynomials $p$ and $q$ by
\[
G(w)=p(w^2)+w\,q(w^2),
\]
i.e.
\[
p(z)=\sum_{k=0}^n a_{2k}z^k,\qquad q(z)=\sum_{k=0}^{n-1} a_{2k+1}z^k.
\]
Normalizing, we obtain the conditional pmfs
\begin{equation}
\label{eq:evenpmf}
\Pp(\Hhalf=2k\mid S\text{ even})=\frac{[z^k]\,p(z)}{p(1)},\qquad
\Pp(\Hhalf=2k+1\mid S\text{ odd})=\frac{[z^k]\,q(z)}{q(1)}.
\end{equation}

Recall that a real polynomial $f(w)\in\mathbb{R}[w]$ is called Hurwitz stable if all its roots lie in the open left half-plane
$\{z\in\mathbb{C}:\mathrm{Re}(z)<0\}$.

\begin{lemma}
\label{lem:quadratic-hurwitz}
If $L,T,W>0$, then $L+Tw+Ww^2$ is Hurwitz stable.
\end{lemma}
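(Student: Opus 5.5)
We will locate the roots of the quadratic $W w^2 + T w + L$ directly, since $W>0$ guarantees it is genuinely of degree two. The plan is to split according to the sign of the discriminant $\Delta = T^2 - 4LW$ and show in each case that every root has negative real part.

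\emph{Case $\Delta<0$.} The two roots are complex conjugates,
\[
w = \frac{-T \pm i\sqrt{-\Delta}}{2W},
\]
so their common real part is $-T/(2W)$. This is negative because $T,W>0$.

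\emph{Case $\Delta\ge 0$.} The roots $w_1,w_2$ are real, and Vieta's formulas give
\[
w_1+w_2=-T/W<0,\qquad w_1w_2=L/W>0.
\]
The positive product forces both roots to be nonzero and of the same sign. The negative sum then forces that common sign to be negative, so both roots are strictly negative.

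Alternatively, both cases follow at once from the Routh--Hurwitz criterion for degree two: a real quadratic is Hurwitz stable if and only if all its coefficients are nonzero and share one sign. Here all three coefficients are positive by hypothesis.

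No real obstacle is expected. The only care needed is in the real-root case, where we must rule out a root at $0$ (this is exactly where $L>0$ is used) and confirm that the leading coefficient $W$ is nonzero, so that the polynomial really has two roots to analyze.
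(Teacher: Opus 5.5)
Your proof is correct and follows the paper's route: the same split on the sign of $T^2-4LW$, the same real-part computation $-T/(2W)$ in the complex case, and the same closing remark about Routh--Hurwitz. The only difference is the real-root case. There you use Vieta ($w_1+w_2=-T/W<0$, $w_1w_2=L/W>0$) instead of the paper's comparison $\sqrt{T^2-4LW}\le T$, which makes strict negativity explicit; the paper's non-strict inequality needs the extra observation $4LW>0$ (so $\sqrt{T^2-4LW}<T$) to exclude a root at $0$.
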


\begin{proof}
The roots are $\frac{-T \pm \sqrt{T^2 - 4LW}}{2W}$. If $T^2 < 4LW$, the roots are complex with real part $-T/(2W) < 0$. If $T^2 \ge 4LW$, the roots are real and both negative, since $\sqrt{T^2 - 4LW} \le \sqrt{T^2} = T$ implies both $-T + \sqrt{T^2-4LW} \le 0$ and $-T - \sqrt{T^2-4LW} \le 0$. In either case all roots lie in the open left half-plane, which can also be seen as an instance of the Routh--Hurwitz criterion for degree~$2$ polynomials.
\end{proof}

\begin{lemma}
\label{lem:product-hurwitz}
If $f_1,\dots,f_n$ are Hurwitz stable real polynomials, then $\prod_{i=1}^n f_i$ is Hurwitz stable.
\end{lemma}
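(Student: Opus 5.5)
The plan is to argue directly from the definition via the fundamental theorem of algebra. The roots of a product of polynomials are exactly the union of the roots of the factors, with multiplicities adding. So I would first note that each $f_i$ is a nonzero polynomial: a Hurwitz stable polynomial is implicitly nonzero, since the zero polynomial vanishes everywhere, including in the closed right half-plane. It follows that $F\coloneqq\prod_{i=1}^n f_i$ is a nonzero real polynomial. It is real because products of real polynomials are real.

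Next, take any root $z_0\in\mathbb C$ of $F$. Since $\mathbb C[w]$ is an integral domain, or simply by evaluating $F(z_0)=\prod_i f_i(z_0)=0$ as a product of complex numbers, some factor satisfies $f_i(z_0)=0$. By the Hurwitz stability of that $f_i$, we get $\mathrm{Re}(z_0)<0$. Hence every root of $F$ lies in the open left half-plane, which is exactly the definition of Hurwitz stability recalled before Lemma~\ref{lem:quadratic-hurwitz}.

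I would also remark on degenerate cases. Constant nonzero factors have no roots and are vacuously Hurwitz stable. A product in which all factors are constant is likewise vacuously stable, and this convention is harmless for the later application. There the factors $L_i+T_iw+W_iw^2$ need not all have positive coefficients, so some factors may degenerate to lower degree, for instance $L_i+T_iw$ when $W_i=0$, whose root $-L_i/T_i$ is still negative. That case is handled separately when the lemma is invoked, not here.

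There is no real obstacle. The only point needing care is stating that roots of a product are precisely the roots of the factors, which follows immediately from the absence of zero divisors in $\mathbb C$.
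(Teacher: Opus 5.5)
Your argument is correct and is the same as the paper's one-line proof: any root of the product is a root of some factor, so it lies in the open left half-plane. Your closing aside about degenerate factors is outside this lemma, and it also misdescribes the paper, which does not treat that case separately when invoking the lemma; note that a factor $L_i+W_iw^2$ with $T_i=0$ and $L_i,W_i>0$ has purely imaginary roots and so is not Hurwitz stable at all.
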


\begin{proof}
The roots of the product are the multiset union of roots of the factors.
\end{proof}

By Lemmas \ref{lem:quadratic-hurwitz} and \ref{lem:product-hurwitz}, $G(w)$ is Hurwitz stable and satisfies $G(0)=\prod_i L_i>0$.

\begin{theorem}[cf.\ \cite{holtzHB} Theorem 1]
\label{thm:HB}
Let $f(w)\in\mathbb{R}[w]$ be Hurwitz stable and write
\[
f(w)=g(w^2)+w\,h(w^2)
\]
with $g,h\in\mathbb{R}[w]$. If $f(0)\neq 0$, then all zeros of $g$ and of $h$ are real and non-positive.
\end{theorem}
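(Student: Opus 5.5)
The route I would take is the Hermite--Biehler theorem: stability of $f$ is equivalent to $g$ and $h$ forming a real-rooted interlacing pair, with the roots of $f(iy)$ controlling those of $g$ and $h$.

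\textbf{Setting up.} Write $f$ of degree $d$ with positive leading coefficient (multiplying by $-1$ if needed). Hurwitz stability then makes all coefficients of $f$ strictly positive, because $f$ is a product of factors $w+r$ with $r>0$ and $w^2+2\Re(\rho)\,w+|\rho|^2$ with $\Re(\rho)>0$. Consequently $g$ and $h$ have positive coefficients. So once we know they are real-rooted, their zeros are automatically non-positive, and in fact strictly negative since $g(0)=f(0)\ne0$. The whole problem therefore reduces to showing that $g$ and $h$ have only real zeros.

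\textbf{Counting real zeros via the argument principle.} Put $w=iy$ with $y\in\mathbb R$. Then
\[
f(iy)=g(-y^2)+iy\,h(-y^2),
\]
so the real part is $g(-y^2)$ and the imaginary part is $y\,h(-y^2)$. Every root of $f$ lies in the open left half-plane. Hence as $y$ runs from $-\infty$ to $\infty$, the point $f(iy)$ never vanishes, and each root contributes an increase of $\pi$ to $\arg f(iy)$. The total increase in argument is therefore $d\pi$, and it is strictly monotone, since each factor $\arg(iy-\rho)$ with $\Re\rho<0$ is strictly increasing. A continuous curve whose argument increases monotonically by $d\pi$ crosses the imaginary axis (real part $0$) and the real axis (imaginary part $0$) alternately, at least $d$ times in total, counting interleaved crossings.

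Concretely, $\Re f(iy)=g(-y^2)$ has at least $2\deg g$ real zeros in $y$, namely $\pm$ pairs, nonzero because $g(0)\neq0$. Likewise $y\,h(-y^2)$ vanishes at $y=0$ and at least $2\deg h$ other points. The degree bookkeeping works as follows. If $d=2m$, then $\deg g=m$ and $\deg h\le m-1$. The argument sweeps $2m\pi$, which forces $2m$ distinct zeros of $g(-y^2)$ in $y$, i.e.\ $m$ distinct positive values of $y^2$. These give $m$ distinct negative zeros of $g$, so $g$ is real-rooted. The same count gives $2m-1$ zeros of $y\,h(-y^2)$, hence $m-1$ negative zeros of $h$, and $h$ is real-rooted. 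The odd case $d=2m+1$ is handled in the same way.

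\textbf{Main obstacle.} The delicate step is the claim that a monotone argument sweep of $d\pi$ yields exactly the stated number of distinct sign changes. One has to check the endpoint behaviour as $y\to\pm\infty$, where the leading term dominates and fixes the asymptotic argument. One also has to rule out double counting at $y=0$. I would make this precise by tracking $\theta(y)=\arg f(iy)$. This is a continuous, strictly increasing function. It equals a multiple of $\pi/2$ exactly when the real or imaginary part vanishes, and it passes through every intermediate multiple of $\pi/2$ exactly once. Counting these multiples between $\theta(-\infty)$ and $\theta(\infty)$, which differ by $d\pi$, gives the zero counts and completes the proof.
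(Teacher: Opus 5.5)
Your proof is correct. The paper gives no argument of its own here; it cites the Hermite--Biehler theorem (Gantmacher, Holtz). You instead prove the needed direction directly with the argument principle on the imaginary axis, which is essentially the classical proof behind that citation.

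The rigorous form of your count is the one in your final paragraph. Normalize the leading coefficient to be positive. Then $\theta(y)=\arg f(iy)$ is continuous and strictly increasing from $-d\pi/2$ to $d\pi/2$, with $\theta(0)=0$, so it takes each value $k\pi/2$ with $|k|<d$ exactly once.
\begin{itemize}
\item Odd $k$ give exactly $2\lfloor d/2\rfloor$ zeros of $g(-y^2)$.
\item Even $k$ give exactly $2\lceil d/2\rceil-1$ zeros of $y\,h(-y^2)$.
\end{itemize}
We have $\deg g\le\lfloor d/2\rfloor$ and $\deg h\le\lceil d/2\rceil-1$, with $g(0)\neq0$ and $h\not\equiv0$ for $d\ge1$. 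Hence the values $-y^2$ obtained this way are all the zeros of $g$ and $h$.

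Two minor points. Your earlier sentence about crossing the axes ``at least $d$ times'' is vague and should simply be replaced by this exact count. Your positivity-of-coefficients step is harmless but unnecessary, since every zero produced has the form $-y^2$ with $y\neq0$.

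Compared with the bare citation, your self-contained route also gives more information. The zeros of $g$ and $h$ are simple and strictly negative, and the two polynomials have exact degrees $\lfloor d/2\rfloor$ and $\lceil d/2\rceil-1$. Their zeros also interlace, because odd and even multiples of $\pi/2$ alternate as $y$ runs through $(0,\infty)$.
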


\begin{proof}
This is a version of the Hermite--Biehler Theorem; see \cite{gantmacher} or \cite{holtzHB}.
\end{proof}

Returning to our conditional distributions:

\begin{proposition}
\label{cor:unimodal}
The conditional distributions $\Hhalf\mid(S\text{ even})$ and $\Hhalf\mid(S\text{ odd})$ are log-concave (and hence unimodal) on their natural lattices.
Equivalently, $X\mid(S\text{ even})$ is log-concave on $\{0,1,\dots,n\}$ and
$X\mid(S\text{ odd})$ is log-concave on $\{\tfrac12,\tfrac32,\dots,n-\tfrac12\}$. 
\end{proposition}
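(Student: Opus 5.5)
The plan is to apply Theorem~\ref{thm:HB} to $G$, conclude that $p$ and $q$ have only real non-positive zeros, and then use Newton's inequalities to get log-concavity of their coefficient sequences. By \eqref{eq:evenpmf}, these coefficients are exactly the conditional pmfs.

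\emph{Step 1: reduce to strictly positive parameters.} Theorem~\ref{thm:HB} needs Hurwitz stability, and Lemma~\ref{lem:quadratic-hurwitz} only applies when $L_i,T_i,W_i>0$. So I would first assume all $L_i,T_i,W_i>0$. Then $G$ is Hurwitz stable with $G(0)>0$, and Theorem~\ref{thm:HB} shows that all zeros of $p$ and of $q$ are real and non-positive. The coefficients $a_m=\Pp(H=m)$ are nonnegative, with $a_0=p(0)>0$. If $q$ is not identically zero, we can therefore factor
\[
p(z)=a_{2n}\prod_k (z+\rho_k), \qquad q(z)=a_{2n-1}\prod_k (z+\sigma_k),
\]
with $\rho_k,\sigma_k\ge 0$. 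The leading coefficients are $a_{2n}=\prod W_i>0$ and $a_{2n-1}=\sum_i T_i\prod_{j\ne i}W_j>0$.

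\emph{Step 2: Poisson binomial form.} Normalizing $p$ by $p(1)$ gives
\[
\frac{p(z)}{p(1)}=\prod_k \frac{z+\rho_k}{1+\rho_k}=\prod_k\bigl((1-\pi_k)+\pi_k z\bigr), \qquad \pi_k=\frac{1}{1+\rho_k}\in(0,1].
\]
This is the pgf of a Poisson binomial distribution, and likewise for $q/q(1)$; this also yields the Poisson binomial claim in Theorem~\ref{thm:main}. Log-concavity then follows from Newton's inequalities for a real-rooted polynomial with nonnegative coefficients:
\[
c_k^2\ge c_{k-1}c_{k+1}\Bigl(1+\tfrac1k\Bigr)\Bigl(1+\tfrac1{d-k}\Bigr)\ge c_{k-1}c_{k+1}.
\]
Alternatively, log-concavity is preserved under convolution, and Bernoulli sequences are log-concave. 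Since all coefficients up to the degree are positive, there are no internal zeros, and log-concavity implies unimodality.

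\emph{Step 3: boundary parameters, the main obstacle.} The remaining work is when some $L_i,T_i,W_i$ vanish. I would handle this by continuity. Perturb the parameters to strictly positive values (for example, mix each $X_i$ with the uniform distribution on $\{0,\tfrac12,1\}$ with weight $\varepsilon$) and let $\varepsilon\to 0$. The coefficients of $p_\varepsilon$ and $q_\varepsilon$ converge to those of $p$ and $q$. Real-rootedness with nonpositive roots survives the limit by Hurwitz's theorem; a degree drop only sends roots to $-\infty$, which corresponds to $\pi_k\to 0$ and is harmless. The inequalities $c_k^2\ge c_{k-1}c_{k+1}$ also pass to the limit.

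The remaining point is internal zeros: a nonnegative sequence can satisfy these inequalities and still have a gap. Here the limiting polynomial is itself a product of Bernoulli-type factors $(1-\pi_k)+\pi_k z$ with $\pi_k\in[0,1]$, so it is a genuine Poisson binomial pgf. Its support is therefore a contiguous interval, which rules out internal zeros and gives log-concavity on the natural lattice. The hypothesis $\Pp(S\text{ odd})>0$ guarantees $q(1)>0$, so the normalization is well defined.
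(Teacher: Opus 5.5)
Your proof is correct and follows the paper's route: apply the Hermite--Biehler theorem (Theorem~\ref{thm:HB}) to $G$ to conclude that $p$ and $q$ have only real, non-positive zeros, then use Newton's inequalities to get log-concavity of their coefficient sequences, which are the conditional pmfs. Your Step~3 adds rigor the paper lacks. The paper's proof silently assumes $L_i,T_i,W_i>0$, which it needs both for Lemma~\ref{lem:quadratic-hurwitz} and for $G(0)=\prod_i L_i>0$. Your perturbation-and-limit argument handles the degenerate parameter values correctly, because it shows the limit is an honest Poisson binomial pgf and therefore has contiguous support.
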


\begin{proof}
By Theorem~\ref{thm:HB} applied to $f = G$, the polynomials $p$ and $q$ have only real, non-positive zeros. Since their coefficients are non-negative, 
for each polynomial they form a log-concave and hence unimodal sequence. This is a standard consequence of Newton's inequalities; see, e.g., \cite{hardy}. Normalization by the constants $p(1)$ and $q(1)$  does not affect log-concavity or unimodality.
\end{proof}

\subsection{Poisson binomial representation and mode--mean proximity}

Since $p$ is a polynomial with positive coefficients and non-positive real roots, it can be written as
\[
p(z)=p(0)\prod_{j=1}^n (\beta_j+ z),\qquad \beta_j\geq 0,
\]
and similarly for $q$. After normalizing,  both $p$ and $q$ are pgfs of sums of independent Bernoulli random variables, i.e., Poisson binomial distributions.
The same holds for $q$.

\begin{theorem}[cf.\ \cite{darroch} Theorem 2 and Theorem 4]
\label{thm:darroch}
Let $B=\sum_{j=1}^m B_j$ be a sum of independent Bernoulli random variables with mean $\E[B]=\mu_B$.
If $m_B$ is any mode of $B$ (if there are two, they are adjacent and either may be chosen), then
\[
|m_B-\mu_B|<1.
\]
\end{theorem}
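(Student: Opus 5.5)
The plan is the standard Darroch argument: show that any mode of $B$ lies between $\lfloor\mu_B\rfloor$ and $\lceil\mu_B\rceil$, up to the adjacent second mode, which then gives $|m_B-\mu_B|<1$. Write $b_k=\Pp(B=k)$, $k=0,\dots,m$, and let $P(z)=\prod_{j=1}^m(1-p_j+p_jz)=\sum_k b_kz^k$, where $p_j=\Pp(B_j=1)$. First I dispose of degenerate factors. If $p_j\in\{0,1\}$, that factor only shifts the distribution or does nothing, and it shifts mean and modes equally. So I assume all $p_j\in(0,1)$, which makes $b_k>0$ for $0\le k\le m$. By Newton's inequalities (already invoked in Proposition~\ref{cor:unimodal}) the sequence is strictly log-concave. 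Hence the ratios $b_{k+1}/b_k$ are strictly decreasing in $k$, and there are one or two adjacent modes.

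The key step locates the modes relative to $\mu_B$. I want two facts: if $k\le \mu_B-1$ then $b_{k+1}>b_k$, and symmetrically if $k\ge\mu_B+1$ then $b_{k-1}>b_k$. Together these force every mode $m_B$ into the open interval $(\mu_B-1,\mu_B+1)$, which is the claim.

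To get the first fact I would use Darroch's comparison. Consider $\E[(B-k)\mathbf 1\{\cdot\}]$, or more concretely the identity
\[
\sum_{j}p_j\,\Pp(B^{(j)}=k)=(k+1)b_{k+1},
\]
where $B^{(j)}=B-B_j$, together with
\[
\sum_j(1-p_j)\Pp(B^{(j)}=k)=(m-k)b_k.
\]
Both identities come from conditioning on the single trial $B_j$. From them,
\[
(k+1)b_{k+1}-(m-k)b_k\cdot\frac{\mu_B-k}{m-\mu_B}
\]
can be analyzed. The cleaner route is Darroch's original one: define $\phi(k)=\dfrac{(k+1)b_{k+1}}{(m-k)b_k}$ and compare it with $\dfrac{\mu_B-k}{m-\mu_B+k}$-type bounds derived from the same conditioning. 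I expect this comparison to be the main obstacle. The inequality $b_{k+1}>b_k$ for $k+1\le\mu_B$ is the genuinely nontrivial content, namely Darroch's Theorem~2, that the mode satisfies $\lfloor\mu_B\rfloor\le m_B\le\lceil\mu_B\rceil$.

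My first attempt at that step would be induction on $m$. Adding one Bernoulli$(p)$ trial sends $b_k\mapsto(1-p)b_k+pb_{k-1}$ and raises the mean by $p\in[0,1]$. Using strict log-concavity, the mode moves by $0$ or $1$ in a way that tracks the mean, and this would maintain the bracketing $\lfloor\mu\rfloor\le m_B\le\lceil\mu\rceil$. If the induction becomes messy at the tie case, I would fall back on the direct extremal argument. That argument fixes $\mu_B$, fixes the index $k$, and shows that $b_{k+1}-b_k$ is minimized when the $p_j$ take at most two distinct values. This reduces the problem to binomial-like computations, where the mode/mean relation is explicit.

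Once the bracketing holds, the conclusion is immediate. If $\mu_B\notin\Z$, every mode lies in $\{\lfloor\mu_B\rfloor,\lceil\mu_B\rceil\}$, and both points are at distance less than $1$ from $\mu_B$. If $\mu_B\in\Z$, Darroch shows the mode is exactly $\mu_B$, so the distance is $0$. Either way $|m_B-\mu_B|<1$, including when there are two adjacent modes.
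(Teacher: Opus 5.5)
You have a genuine gap. The paper gives no argument of its own here beyond the citation to Darroch, so the question is whether your sketch stands on its own, and it does not. Your reductions are fine: discarding $p_j\in\{0,1\}$, strict log-concavity via Newton, the two conditioning identities (both correct), and the observation that the two monotonicity facts confine every mode to $(\mu_B-1,\mu_B+1)$. But you never prove the step you yourself single out as the whole content of the theorem, namely $b_{k+1}>b_k$ whenever $k+1\le\mu_B$ (and its mirror). The displayed comparison is left at ``can be analyzed'' and ``$\frac{\mu_B-k}{m-\mu_B+k}$-type bounds''. In the end you appeal to ``Darroch's Theorem~2'' and to ``Darroch shows the mode is exactly $\mu_B$'', which is the very statement being proved. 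If citing is acceptable, your write-up collapses to the paper's one-line proof and the scaffolding is superfluous. If not, the hole sits exactly where the theorem lives.

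Neither fallback closes it as written. The induction on $m$ cannot run on ``bracketing plus log-concavity'' alone. Adding a Bernoulli$(p)$ trial gives $b'_{k+1}-b'_k=(1-p)(b_{k+1}-b_k)+p(b_k-b_{k-1})$, and for $\mu_B<k+1\le\mu_B+p$ the first term is negative with nothing controlling its size. Concretely, take $b_j\propto r^j$ on $0\le j\le N$ with $r=0.45$ and $N$ large. This is log-concave with mode $0$ and mean $\mu\approx0.82$, so it satisfies the bracketing. After adding a trial with $p=1-\mu$ the mean is $1$, yet $b'_0\approx0.82\,b_0>b'_1=\bigl((1-p)r+p\bigr)b_0\approx0.55\,b_0$, so the mode stays at $0$. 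A tiny perturbation makes the example strictly log-concave without changing this. So an inductive proof must carry quantitative information specific to Poisson binomial laws, such as Darroch's thresholds $k+\frac1{k+2}$ and $k+1-\frac1{m-k+1}$, and that is the hard part.

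The extremal route is viable, but it rests on Hoeffding's 1956 theorem, which you assert rather than prove, and in a form that does not give what you need. The correct statement is that, with $\sum_j p_j$ fixed, extrema of $\E g(B)$ over the closed cube $[0,1]^m$ are attained where the $p_j$ take at most one value strictly between $0$ and $1$, so that $B$ is a shifted binomial. ``At most two distinct values'' would allow two interior values, giving a convolution of two binomials whose mode is not explicit. Even granting Hoeffding, you must work on the closed cube for compactness. There, configurations with many $p_j=1$ give $b_k=b_{k+1}=0$, so at general $k$ you only get $b_{k+1}\ge b_k$. The fix is to apply it only at $k=\lfloor\mu_B\rfloor-1$, where every shifted-binomial configuration with mean $\mu_B$ has $b_{k+1}-b_k>0$. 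Then propagate to smaller $k$ by strict log-concavity, and get the mirror side via $p_j\mapsto 1-p_j$. That strictness is precisely what excludes $|m_B-\mu_B|=1$ when $\mu_B\in\Z$.
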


\begin{proof}
See \cite{darroch}, for example.
\end{proof}

We now look at the conditional modes. Let
\[
\meven\in\arg\max_{k\in\{0,1,\dots,n\}} \Pp(X=k \mid S\text{ even}),
\qquad
\modd\in\arg\max_{k\in\{\tfrac12,\tfrac32,\dots,n-\tfrac12\}} \Pp(X=k \mid S\text{ odd}).
\]

\begin{corollary}
\label{cor:mode-mean}
The conditional distributions
$X\mid( S\text{ even})$ and $X\mid(S\text{ odd})$ each have either one mode or two adjacent modes, and 
\[
 |\meven-\mue|<1,\qquad |\modd-\muo|<1.
\]
\end{corollary}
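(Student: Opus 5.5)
The plan is to reduce the corollary to Theorem~\ref{thm:darroch} via the Poisson binomial representation just established, while keeping track of the change of variables between $H$, the exponent $k$, and $X$.

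\textbf{Step 1: the even part.} By the factorization
\[
p(z)=p(0)\prod_{j}(\beta_j+z), \qquad \beta_j\ge 0,
\]
the normalized polynomial $p(z)/p(1)$ equals $\prod_j \frac{\beta_j+z}{\beta_j+1}$. This is the pgf of $B^{\mathrm{e}}=\sum_j B_j$ with independent $B_j\sim\mathrm{Bernoulli}(1/(1+\beta_j))$. One point needs care: the number of factors is $\deg p$, which may be less than $n$ if the leading coefficients vanish. Also, when $p(0)=0$ the factorization is more naturally written $c\prod_j(\beta_j+z)$ with $c$ the leading coefficient; in either case each factor is a Bernoulli pgf, and roots at $0$ give Bernoulli$(1)$ variables. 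By \eqref{eq:evenpmf}, $\Pp(X=k\mid S\text{ even})=[z^k]p(z)/p(1)=\Pp(B^{\mathrm{e}}=k)$, since $H=2k$ means $X=k$. Hence $X\mid(S\text{ even})$ has the same law as $B^{\mathrm{e}}$. Its modes are therefore the modes of $B^{\mathrm{e}}$, and its mean is $\mue=\E[B^{\mathrm{e}}]$. Theorem~\ref{thm:darroch} then gives $|\meven-\mue|<1$ directly, together with the fact that the modes are either unique or two adjacent integers.

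\textbf{Step 2: the odd part.} The same argument applies to $q$, giving a Poisson binomial $B^{\mathrm{o}}$ with $\Pp(B^{\mathrm{o}}=k)=[z^k]q(z)/q(1)$. Here $H=2k+1$ corresponds to $X=k+\tfrac12$, so $X\mid(S\text{ odd})$ has the same law as $B^{\mathrm{o}}+\tfrac12$. A deterministic shift moves the modes and the mean together: $\modd=m_{B^{\mathrm{o}}}+\tfrac12$ and $\muo=\E[B^{\mathrm{o}}]+\tfrac12$. The difference is therefore unchanged, and Theorem~\ref{thm:darroch} yields $|\modd-\muo|<1$. Adjacency of two modes on the half-integer lattice follows in the same way.

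\textbf{Main obstacle.} The only delicate point is justifying the Bernoulli factorization. I will need nonnegativity of the coefficients, which holds because they are probabilities, and realness and nonpositivity of the roots. Theorem~\ref{thm:HB} supplies the latter but assumes $G(0)=\prod_i L_i>0$ and Hurwitz stability via Lemma~\ref{lem:quadratic-hurwitz}, which requires $L_i,T_i,W_i>0$. If some parameters vanish, the plan is a limiting argument. I would perturb the parameters to be strictly positive, observe that the coefficients of $p$ and $q$ depend continuously on the parameters, and note that a limit of polynomials with nonnegative coefficients and only real nonpositive roots keeps that property, with degree drops corresponding to roots escaping to $-\infty$. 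This gives real nonpositive roots of $p$ and $q$ in general, provided $p(1),q(1)>0$, which is exactly the standing hypothesis. With that in place, Steps 1 and 2 are routine.
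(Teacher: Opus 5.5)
Your proof is correct and follows the paper's route. You identify the law of $X$ given $S$ even, and of $X-\tfrac12$ given $S$ odd, with the Poisson binomial laws whose pgfs are $p(z)/p(1)$ and $q(z)/q(1)$, then apply Theorem~\ref{thm:darroch}, noting that the half-integer shift moves mode and mean together. Your extra care fills in points the paper glosses over without changing the route: you write the factorization with the leading coefficient (the paper's $p(0)\prod_j(\beta_j+z)$ is off by a constant and collapses when $p(0)=0$), and you use a perturbation plus root-continuity argument when some $L_i,T_i,W_i$ vanish, which is exactly where Lemma~\ref{lem:quadratic-hurwitz} and $G(0)>0$ are unavailable.
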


\begin{proof}
In the even case, the integer-valued random variable
\[
X_{\mathrm{even}}:=\frac{\Hhalf}{2}\ \Big|\ (S\text{ even})
\]
has pgf $p(z)/p(1)$ by \eqref{eq:evenpmf}, hence is Poisson--binomial. Theorem~\ref{thm:darroch} gives
$|\meven-\E[X_{\mathrm{even}}]|<1$. But $\E[X_{\mathrm{even}}]=\E[X\mid S\text{ even}]=\mue$.

In the odd case, the integer-valued
\[
X_{\mathrm{odd}}:=\frac{\Hhalf-1}{2}\ \Big|\ (S\text{ odd})
\]
has pgf $q(z)/q(1)$ and is Poisson--binomial.  Theorem~\ref{thm:darroch} gives
$|\text{mode}(X_{\mathrm{odd}})-\E[X_{\mathrm{odd}}]|<1$, which translates to
$|\modd-\muo|<1$.
\end{proof}

Combining Corollary~\ref{cor:mode-mean} with Proposition~\ref{prop:bound}, we can put a universal bound on both types of modes, in terms of distance to the mean $\mu$:

\begin{proposition}
\label{prop:mode-mean}
If $\meven$ and $\modd$ are modes of the two conditional distributions $X\mid( S\text{ even})$ and $X\mid(S\text{ odd})$, respectively, then 
\[
 |\meven-\mu|<\frac 3 2,\qquad |\modd-\mu|< \frac 3 2.
\]
In particular, $|\meven - \modd | \leq \frac 5 2$. 
\end{proposition}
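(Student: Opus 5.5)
The plan is to combine the two earlier bounds with the triangle inequality. The first gives the bounds to $\mu$; the second needs an extra lattice argument, because the naive sum of the first two is only strict, not $\le\frac52$ in the sharp sense required.

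For the first part, Corollary~\ref{cor:mode-mean} gives $|\meven-\mue|<1$, and Proposition~\ref{prop:bound} gives $|\mue-\mu|\le\frac12$. Hence
\[
|\meven-\mu|\le|\meven-\mue|+|\mue-\mu|<1+\tfrac12=\tfrac32 .
\]
The bound $|\modd-\mu|<\frac32$ follows in the same way from $|\modd-\muo|<1$ and $|\muo-\mu|\le\frac12$.

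For the second part, the triangle inequality with the two bounds just proved gives $|\meven-\modd|<3$. A strict inequality below $3$ does not by itself give $\le\frac52$, so I will use the lattice structure. Since $\meven\in\Z$ and $\modd\in\Z+\frac12$, the difference $\meven-\modd$ lies in $\Z+\frac12$. The elements of $\Z+\frac12$ with absolute value strictly less than $3$ are $\pm\frac12,\pm\frac32,\pm\frac52$. Therefore $|\meven-\modd|\le\frac52$.

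The only step requiring care is this last parity observation. Without it the estimate stops at $<3$, and it is the half-integer offset between the two supports that upgrades it to $\le\frac52$.
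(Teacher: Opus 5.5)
Your proof is correct and matches the paper's argument. Both combine Corollary~\ref{cor:mode-mean} with Proposition~\ref{prop:bound} via the triangle inequality to get the $<\frac32$ bounds, then note that $|\meven-\modd|<3$ sharpens to $\le\frac52$ because $\meven-\modd\in\Z+\frac12$.
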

\begin{proof}
The first part is an immediate consequence of Corollary~\ref{cor:mode-mean} and Proposition~\ref{prop:bound}. The second part follows since the first part implies $|\meven - \modd | < 3$, and the two modes live on the integer and half-integer lattice. 
\end{proof}

\begin{proof}[Proof of Theorem~\ref{thm:main}]
The theorem is immediate from Proposition~\ref{prop:bound}, Theorem~\ref{cor:unimodal}, and Proposition~\ref{prop:mode-mean}.
\end{proof}

\section{The degenerate case}\label{sec:deg}

For completeness, we also consider the degenerate case where  $\Pp(X\in \mathbb Z) = 0$ or $\Pp(X\in \mathbb Z+ \frac 1 2) = 0$. 

\begin{proof}[Proof of Theorem~\ref{thm:A}]
Suppose there is some $T_i\in (0,1)$. Since the probabilities $\Pp\left(X-X_i\in \mathbb Z+\frac 1 2\right)$ and $\Pp\left(X-X_i\in \mathbb Z\right)$ add up to one, at least one of them is positive. Then 
\[
\Pp(X\in \mathbb Z)  = T_i \, \Pp\left(X-X_i\in \mathbb Z+\frac 1 2\right) + (1-T_i)\, \Pp(X-X_i\in \mathbb Z)
\]
is positive too. Similarly, $\Pp(X\in \mathbb Z +\tfrac12)$ is positive. This contradicts the assumption. Thus, $T_i\in \{0,1\}$ for all $i$. 

By reordering, assume $T_1 = \cdots = T_k = 0$ and $T_{k+1} = \cdots = T_n = 1$. The distribution for $X_1+ \cdots+ X_k$ is Poisson binomial, while $X_{k+1}+ \cdots+ X_n$ has a point mass distribution at $\frac{n-k}{2}$. The distribution for the sum $X = (X_1+\cdots+X_k) + (X_{k+1}+\cdots+X_n)$ is therefore a Poisson binomial distribution shifted by $\frac{n-k}{2}$.
\end{proof}

Note that in this degenerate case, the mean is $\mu = \frac{n-k}{2} + \sum_{i=1}^k W_i$, and any mode is within distance $1$ of $\mu$ by \cite{darroch}.

\section{Applications}\label{sec:apps}

In \cite{BroadiePetkova2026}, we apply Theorem~\ref{thm:main} to study optimal player matchups in team competitions with win-tie-loss formats. Specifically, we consider series of one-on-one games in which one team enters with a score deficit or a big lead and seeks to win the series overall.

An example of such an event is the last day of the Ryder Cup competition in golf. Each team has 12 players and the competition consists of three days of match play worth $28$ points in total.  The final day features 12 singles matches, where each team member plays one-on-one against an opponent from the other team. The team with the most total points wins the cup; in the event of a tie, the defending champion retains the cup.

Consider, most generally, a match-play competition where team~$A$ with players of strengths $a_1 \geq a_2 \geq \cdots \geq a_n$ faces team~$B$ with players of strengths $b_1 \geq b_2 \geq \cdots \geq b_n$. An \emph{ordering} $\sigma$ assigns team~$B$'s players to positions, so player $\sigma(i)$ on team~$B$ faces player $i$ on team~$A$ in match $i$. Denote the random number of points ($1$ point to the winner, or half a point to each team in case of a tie) earned by team~$B$ in match $i$  by $X_{\sigma(i),i}$.  We take the perspective of team~$B$.  Suppose team $A$ has ordered its players, and suppose team $B$ needs at least $k$ points in the $n$ matches in order to win the competition, where $k\in \{$0, 0.5, $\ldots$, $n-0.5$, $n\}$ depends on the number of points accumulated in previous events. 
The goal for team~$B$ is to find an ordering $\sigma$ that maximizes 
\begin{equation}
\label{eq:obj}
P(X_{\sigma} \geq k),
\end{equation}
where
\[
X_{\sigma} = X_{\sigma(1),1} + \cdots + X_{\sigma(n),n}
\]
is the total random points earned. Note that $X_\sigma$ follows a Poisson trinomial distribution.

In many scenarios, the probability that a player wins a match against another is best modeled by a logistic regression in terms of the difference in strength; tie and loss probabilities are then derived using symmetries. When the strength range of the competition pool is not very wide, e.g.\ in professional sports, this model is very well approximated by a linear one. In a linear model, the probabilities that a player wins, loses, or ties a match against another are modeled by the linear functions
\begin{align*}
W(s) & = \alpha s + \beta, \\
L(s) & = - \alpha s + \beta,\\
T(s) & = 1-2\beta, 
\end{align*}
where $s$ is the strength differential, and $\alpha> 0$ and $\beta\in [0,1/2]$ are constants. A necessary assumption is that the domain for the three functions, i.e., the set of all possible values of the strength differential, is contained in the interval $(-\frac{\beta}{\alpha}, \frac{\beta}{\alpha})\cap (\frac{\beta -1}{\alpha}, \frac{1-\beta}{\alpha})$; this ensures $0\leq W(s)\leq 1$. 

Given such a linear model, we solve this optimization problem for almost all values of $k$. 

First observe that the expected total score $\E[X_\sigma]$ is independent of $\sigma$: 
 
 \begin{proposition}\label{prop:lin-E}
    For any ordering $\sigma$, 
    \[\E[X_{\sigma}] = \frac{n}{2} + \alpha\sum_{i=1}^n b_i -\alpha\sum_{i=1}^n a_i.\]
\end{proposition}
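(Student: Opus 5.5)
The plan is to use linearity of expectation and reduce everything to the linear model. By linearity, $\E[X_\sigma]=\sum_{i=1}^n \E[X_{\sigma(i),i}]$. Here $X_{\sigma(i),i}$ takes the value $1$ with probability $W(s_i)$, $\tfrac12$ with probability $T(s_i)$, and $0$ with probability $L(s_i)$. The strength differential from team~$B$'s perspective is $s_i=b_{\sigma(i)}-a_i$. By the formula \eqref{eq:mu} for the mean of a Poisson trinomial distribution, each summand therefore equals $W(s_i)+\tfrac12 T(s_i)$.

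Next we substitute the linear model. This gives
\[
W(s_i)+\tfrac12 T(s_i)=\alpha s_i+\beta+\tfrac12(1-2\beta)=\tfrac12+\alpha s_i,
\]
so the constant $\beta$ cancels. Summing over $i$ yields $\E[X_\sigma]=\tfrac n2+\alpha\sum_{i=1}^n\bigl(b_{\sigma(i)}-a_i\bigr)$.

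The last step is to note that $\sigma$ is a bijection of $\{1,\dots,n\}$. Hence $\sum_i b_{\sigma(i)}=\sum_i b_i$, which gives the claimed formula, and it does not depend on $\sigma$.

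The only point needing care is the sign convention for $s$. Since the points are earned by team~$B$, the differential must be $b_{\sigma(i)}-a_i$, so that $W(s)$ is team~$B$'s win probability. The assumed domain condition guarantees that all three probabilities are valid. No step is a real obstacle.
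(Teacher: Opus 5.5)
Your proof is correct and follows the same route as the paper: linearity of expectation, substituting $W(s)+\tfrac12 T(s)=\tfrac12+\alpha s$ with $s=b_{\sigma(i)}-a_i$, and reindexing $\sum_i b_{\sigma(i)}=\sum_i b_i$ because $\sigma$ is a bijection.
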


\begin{proof}    
    Fix an ordering $\sigma$. Using linearity of expectation and rearranging indices, we get
    \begin{align*}
    \E[X_{\sigma}] &= \sum_{i=1}^n\E[X_{\sigma(i),i}]\\
    &=\sum_{i=0}^n\left( \left( \alpha \left(b_{\sigma(i)} - a_i\right)  + \beta\right) + \frac{1}{2}\left(1-2\beta\right) \right)\\
    &=  \frac{n}{2} + \alpha\sum_{i=1}^n b_i -\alpha\sum_{i=1}^n a_i. \qedhere
    \end{align*}
\end{proof}
Since $E[X_{\sigma}]$ only depends on the teams $A$ and $B$ but not on the specific ordering, from here on we denote this expectation by $\mu$.

We show that the tail $P(X_{\sigma}\geq k)$ is maximized by the strong-vs-strong order for sufficiently large $k$ and by strong-vs-weak order for sufficiently small $k$, and that in fact that are at most eight values of $k\in \frac 1 2 \Z$ for which the tail is maximized by a different order:

\begin{theorem}\label{thm:linear_ties}
Assume a linear model with tie probability $T(s) = 1 - 2\beta$ for $\beta \in (0, \frac{1}{2})$.
If $k \geq \mu + 2.5$, then $\Pp(X_\sigma \geq k)$ is maximized by $\sigma = (1, 2, \ldots, n)$ (strong-vs-strong).
If $k \leq \mu - 2$, then $\Pp(X_\sigma \geq k)$ is maximized by $\sigma = (n, n-1, \ldots, 1)$ (strong-vs-weak).
For $k\in (\mu-2, \mu+2.5)$, the optimal ordering may depend on how the teams $A$ and $B$ are chosen.
\end{theorem}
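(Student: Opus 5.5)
The key simplification is that in the linear model the tie probability is the same constant $T(s)=1-2\beta$ in every match. So the parity structure does not depend on $\sigma$, and only the differentials $d_i\coloneqq b_{\sigma(i)}-a_i$ enter the distribution. Write $H_\sigma=2X_\sigma$ and use the pgf from Section~\ref{sec:modes}:
\[
G_\sigma(w)=\prod_{i=1}^n\bigl(\beta-\alpha d_i+(1-2\beta)w+(\beta+\alpha d_i)w^2\bigr)=\prod_{i=1}^n\bigl(Q(w)+\alpha d_i(w^2-1)\bigr),
\]
where $Q(w)=\beta+(1-2\beta)w+\beta w^2$ does not depend on $i$ or $\sigma$. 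I would proceed by a pairwise exchange argument. Fix two matches $i,j$ and all other assignments, and let $R=2\sum_{l\ne i,j}X_{\sigma(l),l}$ denote twice the score of the remaining matches, with pmf $h(m)=\Pp(R=m)$. Swapping the opponents of $i$ and $j$ keeps $d_i+d_j$ fixed. Expanding the product of the two factors, the only $\sigma$-dependent term is
\[
\alpha^2 d_id_j\,(w^2-1)^2=\alpha^2 d_id_j\,(w^4-2w^2+1).
\]
Hence
\[
\Pp(H\ge 2k)=C+\alpha^2 d_id_j\,\Delta_k,\qquad \Delta_k=h(2k-4)+h(2k-3)-h(2k-2)-h(2k-1),
\]
with $C$ independent of the swap.

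Next I compare orderings through $d_id_j$. With $a_i\ge a_j$ and $i<j$, the two choices of opponents from $\{b_p,b_q\}$ with $b_p\ge b_q$ give the same sum $d_i+d_j$. The parallel pairing (strong vs strong) makes $d_i,d_j$ closer, so it has the larger product $d_id_j$; the crossed pairing has the smaller product. Therefore, if $\Delta_k\ge0$ for every choice of $i,j$ and every configuration of the remaining matches, each uncrossing swap weakly increases the tail. Repeating swaps until sorted shows that $(1,2,\dots,n)$ is optimal. If instead $\Delta_k\le0$ always, crossing swaps weakly increase the tail, and $(n,n-1,\dots,1)$ is optimal.

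The main work is signing $\Delta_k$, and here I would invoke Theorem~\ref{thm:main} applied to $R/2$, which is again Poisson trinomial (the degenerate case of Theorem~\ref{thm:A} cannot occur since $\beta\in(0,\frac12)$ gives $T\in(0,1)$). Both parity parts of $R$ are log-concave, and their modes, in $X$-units, lie within $3/2$ of $\mu_R=\E[R]/2$. Moreover $\mu-\mu_R=\E[X_{\sigma(i),i}+X_{\sigma(j),j}]\in[0,2]$, and by symmetry of the model this is close to $1$. Then $\Delta_k$ is the sum of two terms, $h(2k-4)-h(2k-2)$ on one parity class and $h(2k-3)-h(2k-1)$ on the other. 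Each term is a one-step difference of a log-concave sequence. It is nonnegative once the points involved lie at or past the mode of their class, and nonpositive once they lie at or before it. Placing $k-2$ and $k-3/2$ beyond $\mu_R+3/2$ is what should produce the threshold $k\ge\mu+2.5$. The lower threshold $k\le\mu-2$ should come symmetrically from putting $k-1$ and $k-1/2$ before the modes.

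The expected obstacle is exactly this bookkeeping. One must bound $\mu-\mu_R$ sharply (two matches carry mean $1+\alpha(d_i+d_j)$), and one must use the half-integer lattice to convert the strict inequalities $<3/2$ into the stated constants $2.5$ and $-2$; the asymmetry between them reflects where the steps of size $1$ start relative to $k$. If the crude mode bound is not quite enough, I would fall back on the sharper Darroch bound applied to the conditional means $\mue,\muo$ of $R$, together with Proposition~\ref{prop:bound}. For the final sentence of Theorem~\ref{thm:linear_ties}, I would exhibit small explicit examples ($n=2$ or $3$) in which the sign of $\Delta_k$ changes inside the window $(\mu-2,\mu+2.5)$.
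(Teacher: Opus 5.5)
Your proposal is correct and is essentially the paper's proof: the same pairwise-exchange computation (your $\Delta_k$ is the paper's $f(Y,k)$, and the change of $\alpha^2 d_id_j$ under the swap is the paper's $\Delta=-\alpha^2(a_i-a_j)(b_{\sigma(i)}-b_{\sigma(j)})$), with $f$ signed through log-concavity of the two parity classes of the remaining sum via Theorem~\ref{thm:main}. The bookkeeping you left open needs no sharp estimate of $\mu-\mu_R$. In the upper regime each mode of a parity class is a lattice point strictly below $\mu_R+\tfrac32\le\mu+\tfrac32\le k-1$, hence at most $k-2$ (resp.\ $k-\tfrac32$); in the lower regime it is strictly above $\mu_R-\tfrac32\ge\mu-\tfrac72\ge k-\tfrac32$, hence at least $k-1$ (resp.\ $k-\tfrac12$); so the crude bound $0\le\mu-\mu_R\le 2$ already gives the thresholds $\mu+2.5$ and $\mu-2$, just as in the paper's ``decreasing to the right of $\E[Y]+0.5$'' step.
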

 
\begin{proof}
The proof of the theorem relies on an analysis of how a swap of two players in an ordering affects the probability of accumulating enough points. 

Fix an ordering $\sigma$, and transposition $\tau^{ij} = \cycle{i, j}$ with $i<j$.  A direct computation shows that 
\[
\Pp(X_{\sigma(i), j} + X_{\sigma(j), i} = s) - \Pp(X_{\sigma(i), i} + X_{\sigma(j), j} = s)  = 
\begin{cases}
    \Delta & \text{if } s = 0, 2 \\
   -  2\Delta  & \text{if } s = 1 \\
    0 & \text{otherwise.}
\end{cases}
\]
where
 \[\Delta  = -\alpha^2(a_i - a_j)(b_{\sigma(i)} - b_{\sigma(j)}).\]
Write $Y = X_{\sigma} - X_{\sigma(i),i} - X_{\sigma(j),j}$. Another direct computation shows that 
    \[
    \Pp(X_{\sigma\tau^{ij}}\geq k ) - \Pp(X_{\sigma}\geq k) =  \Delta \cdot f(Y, k), 
    \]
    where 
    \[f(Y, k) = (\Pp(Y=k-2)-  \Pp(Y=k-1)) + ( \Pp(Y=k-1.5) -  \Pp(Y=k-0.5)).\]
Thus, the difference $ \Pp(X_{\sigma\tau^{ij}}\geq k ) - \Pp(X_{\sigma}\geq k)$ is positive if and only $\sigma(i) > \sigma(j)$ and $f(Y, k)>0$, or $\sigma(i) < \sigma(j)$ and $f(Y, k)<0$. 

Note that Theorem~\ref{thm:main} implies that the distributions for $Y\mid (Y\in \mathbb Z)$ and $Y\mid (Y\in \mathbb Z + \frac 1 2)$ are both decreasing to the right of $\E[Y] + 0.5$. Since $\mu > \E[Y]$, it follows that $\Pp(Y = t) - \Pp(Y=t+1) > 0$ whenever $t\ge \mu+0.5$. In particular, $f(Y,k)$ is positive whenever $k\ge \mu + 2.5$. 
Thus, performing a swap that eliminates an inversion increases the probability of winning at least $k$ matches, for $k\ge \mu + 2.5$. Since we can get from any ordering to $(1, 2, \ldots, n)$ by sequentially eliminating inversions, $\sigma = (1, 2, \ldots, n)$ maximizes $P(X_{\sigma}\geq k)$. 

Similarly, $\Pp(Y = t) - \Pp(Y=t+1) < 0$ whenever $t+1\le \mu-2.5$, so $f(Y,k)$ is negative whenever $k \le \mu -2$. 
Thus, creating a new inversion increases the probability of winning at least $k$ matches, hence this probability is maximized for the ordering $(n, \ldots, 2, 1)$.
\end{proof}

In the degenerate case when there is zero probability of a tie, i.e., $\beta = \frac 1 2$, we solve the ordering problem for all but two integer values of $k$:

\begin{theorem}\label{thm:linear_no_ties}
Assume a linear model with tie probability $T(s) = 0$.
 If $k \geq \mu + 2$, then $\Pp(X_\sigma \geq k)$ is maximized by $\sigma = (1, 2, \ldots, n)$.
 If $k \leq \mu - 1$, then $\Pp(X_\sigma \geq k)$ is maximized by $\sigma = (n, n-1, \ldots, 1)$.
For $k\in (\mu-1, \mu+2)$, the optimal ordering may depend on how the teams $A$ and $B$ are chosen.
\end{theorem}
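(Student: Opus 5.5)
We mirror the swap argument from the proof of Theorem~\ref{thm:linear_ties}, specialized to $\beta=\tfrac12$. With $T(s)=0$, every $X_{\sigma(i),i}$ is Bernoulli with success probability $\alpha(b_{\sigma(i)}-a_i)+\tfrac12$. Hence $X_\sigma$ and every partial sum $Y$ are Poisson binomial, supported on the integers. Proposition~\ref{prop:lin-E} still gives that $\mu=\E[X_\sigma]$ is independent of $\sigma$, and the degenerate-case remark after Theorem~\ref{thm:A} (via \cite{darroch}) is the relevant structural input.

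Fix $\sigma$ and a transposition $\tau^{ij}$ with $i<j$, and set $Y=X_\sigma-X_{\sigma(i),i}-X_{\sigma(j),j}$. The pair computation is unchanged: the law of the two-match sum shifts by $\Delta$ at $s=0,2$ and by $-2\Delta$ at $s=1$, where $\Delta=-\alpha^2(a_i-a_j)(b_{\sigma(i)}-b_{\sigma(j)})$. Since $Y$ now lives on $\Z$, the half-integer terms in $f(Y,k)$ vanish. We therefore get
\[
\Pp(X_{\sigma\tau^{ij}}\ge k)-\Pp(X_\sigma\ge k)=\Delta\bigl(\Pp(Y=k-2)-\Pp(Y=k-1)\bigr).
\]
So the sign is governed by whether the Poisson binomial $Y$ is increasing or decreasing between $k-2$ and $k-1$.

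Next we locate where $Y$ is monotone. $Y$ is Poisson binomial, hence log-concave, with every mode within distance $<1$ of $\E[Y]$ by Theorem~\ref{thm:darroch}. Consequently $\Pp(Y=t)>\Pp(Y=t+1)$ for integers $t\ge \E[Y]+1$ in the support, and $\Pp(Y=t)<\Pp(Y=t+1)$ for $t+1\le \E[Y]-1$. The two removed summands each have mean in $[0,1]$, so $\E[Y]\in[\mu-2,\mu]$. If $k\ge\mu+2$ then $k-2\ge\mu\ge\E[Y]$. Here there is a gap: the Darroch bound gives strict decrease only once $t\ge \E[Y]+1$, which is off by one. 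The way out is the sharper standard fact that for a Poisson binomial, $\Pp(Y=t)\ge\Pp(Y=t+1)$ whenever $t\ge\lceil\E[Y]\rceil$; equivalently, the mode is $\lfloor\E[Y]\rfloor$ or $\lceil\E[Y]\rceil$, which is also part of \cite{darroch}. With $t=k-2\ge\mu\ge\E[Y]$ an integer, this gives $t\ge\lceil\E[Y]\rceil$, hence $\Pp(Y=k-2)\ge\Pp(Y=k-1)$. Symmetrically, if $k\le\mu-1$ then $k-1\le\mu-1$. The removed summands have mean in $(0,1)$ each, so $\E[Y]>\mu-2$, giving $k-1<\E[Y]+1$, i.e.\ $k-1\le\lfloor\E[Y]\rfloor$ when $\E[Y]$ is not an integer. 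The dual fact ($\Pp(Y=t)\le\Pp(Y=t+1)$ for $t+1\le\lfloor\E[Y]\rfloor$) then yields $\Pp(Y=k-2)\le\Pp(Y=k-1)$.

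With these signs, the conclusion follows as in Theorem~\ref{thm:linear_ties}. For $k\ge\mu+2$, any swap removing an inversion has $\Delta\ge0$ and $f\ge0$, so sorting to $(1,\dots,n)$ never decreases the tail. For $k\le\mu-1$, creating inversions never decreases it, so $(n,\dots,1)$ is optimal; ties (weak inequalities) suffice for maximization. The main obstacle is the off-by-one boundary: Theorem~\ref{thm:darroch} as stated ($|m_B-\mu_B|<1$) is too weak. We plan to invoke the floor/ceiling characterization of the Poisson binomial mode from \cite{darroch}, and to handle the boundary case $\E[Y]$ integral via the strict bounds $\E[X_{\sigma(i),i}]\in(0,1)$, which hold under the domain assumption. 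For the middle range $(\mu-1,\mu+2)$, we would exhibit small examples (e.g.\ $n=2$) where either ordering wins.
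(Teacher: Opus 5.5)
Your overall strategy is exactly the ``simpler analogue'' the paper has in mind: the transposition argument with $\Delta=-\alpha^2(a_i-a_j)(b_{\sigma(i)}-b_{\sigma(j)})$, the reduction (for integer $k$) of the tail difference to $\Delta\bigl(\Pp(Y=k-2)-\Pp(Y=k-1)\bigr)$, and locating the mode of the Poisson binomial $Y$ relative to $\E[Y]\in[\mu-2,\mu]$. Your treatment of $k\ge\mu+2$ is correct.

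The case $k\le\mu-1$, however, contains an error as written. From $k\le\mu-1$ you only record $k-1\le\mu-1$, and hence $k-1<\E[Y]+1$. For an integer $k-1$ this gives merely $k-1\le\lceil\E[Y]\rceil$, not $k-1\le\lfloor\E[Y]\rfloor$. For example, $\E[Y]=3.5$ and $k-1=4$ satisfy the first inequality but not the second. So your ``i.e.'' is a non sequitur, and the dual monotonicity fact cannot be applied. The fix is to use the full hypothesis: $k\le\mu-1$ gives $k-1\le\mu-2\le\E[Y]$, and since $k-1$ is an integer, $k-1\le\lfloor\E[Y]\rfloor$. This needs no case split on whether $\E[Y]$ is an integer and no strict bounds $\E[X_{\sigma(i),i}]\in(0,1)$. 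The weak bounds in $[0,1]$ suffice in both directions, and this is exactly why the thresholds come out as $\mu+2$ and $\mu-1$.

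Also, the ``off-by-one obstacle'' you flag is not real. Modes are integers, so $|m_B-\mu_B|<1$ in Theorem~\ref{thm:darroch} is equivalent to $m_B\in\{\lfloor\mu_B\rfloor,\lceil\mu_B\rceil\}$, since these are the only integers in $(\mu_B-1,\mu_B+1)$. Combined with log-concavity, the theorem as stated already gives:
\begin{itemize}
\item $\Pp(Y=t)\ge\Pp(Y=t+1)$ for integers $t\ge\lceil\E[Y]\rceil$;
\item $\Pp(Y=t)\le\Pp(Y=t+1)$ for integers $t+1\le\lfloor\E[Y]\rfloor$.
\end{itemize}
Your weaker intermediate claim (monotonicity only beyond $\E[Y]\pm1$) came from not using the integrality of the mode. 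With the corrected inequality $k-1\le\mu-2$, your proof is complete for integer $k$, which is the setting the paper intends.
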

\begin{proof}
The proof is a simpler analogue of the proof of Theorem~\ref{thm:linear_ties}.
\end{proof}

\bibliographystyle{mwamsalphack}
\bibliography{references}

\end{document}